\spnewtheorem{algorithm}{Algorithm}{\bf}{\it}
\journalname{JOTA}
\begin{document}

\title{A New Modified Newton-Type Iteration Method for Solving Generalized Absolute Value Equations}

%\subtitle{Using  the  LaTex Template}

\titlerunning{SSMN iteration method for GAVEs}        % if too long for running head

\author{Xu Li \and Xiao-Xia Yin }

%\authorrunning{Xu Li et al.} % if too long for running head

\institute{Xu Li \at
	Department of Applied Mathematics, Lanzhou University of Technology, Lanzhou 730050, PR China \\
	%Tel.: +86-931-2975730\\
	%              Fax: +86-931-2976040\\
	\email{lixu@lut.edu.cn}             \\
	\and
	Xiao-Xia Yin \at
	Department of Applied Mathematics, Lanzhou University of Technology, Lanzhou 730050, PR China \\
	\email{2352611979@qq.com}
}

\date{Received: date / Accepted: date}
%The correct dates will be entered by the editor.

\maketitle

\begin{abstract}
A shift splitting modified Newton-type (SSMN) iteration method is introduced for solving large sparse generalized absolute value equations (GAVEs). The SSMN method is established by replacing the regularized splitting of the coefficient matrix of the linear part, which is employed in the modified Newton-type (MN) iteration method, with the shift splitting of the matrix. The conditions for the convergence of the proposed method are discussed in depth for the cases when the coefficient matrix is a general matrix, a symmetric positive definite matrix, and an $H_{+}$-matrix. Through two numerical examples, we find that the SSMN and MN methods complement each other. The optimal performances of the two methods depend on the definiteness of the coefficient matrix of the linear part. The MN method is more efficient when the coefficient matrix is positive definite, whereas the SSMN method has a better performance when the coefficient matrix is indefinite.
\end{abstract}

\keywords{Generalized absolute value equations \and Modified Newton-type iteration method \and Linear complementarity problem \and Convergence conditions}
\subclass{65F10 \and 90C05 \and 90C30}

%All acknowledgements should be placed in the back of the paper after Conclusions..

\section{Introduction}
The generalized absolute value equation (GAVE) is formulated as follows:
\begin{equation}\label{1.1}
	Ax-B|x|=b,
\end{equation}
where $A, \ B\in \mathbb{R}^{n\times n}, b\in \mathbb{R}^{n}$ are given, and $|x|=(|x_1|, \ldots, |x_n|)^T\in \mathbb{R}^{n}$ denotes the componentwise absolute value of the unknown $x$. If  $B = I $, where $I $ describes the identity matrix, the GAVE \eqref{1.1} can be simplified to the following absolute value equation (AVE)
\begin{equation}\label{1.2}
	Ax-|x|=b.
\end{equation}
If $B$ is a nonsingular matrix, the GAVE \eqref{1.1} can be equivalently reformulated as the AVE \eqref{1.2}. Due to the nonlinear property of $|x|$, the GAVE \eqref{1.1} is considered as the following nonlinear system
\begin{equation}\label{1.3}
	F(x)=0, ~\text{with}~ F(x)=Ax-B|x|-b.
\end{equation}

The GAVEs have arisen in various scientific computing fields and engineering applications since they were first introduced by Rohn \cite{AVE1}. One of important research background is the following linear complementarity problem (LCP) \cite{LCP}:
to derive two real vectors $z, \omega \in \mathbb{R}^{n}$ such that
\begin{equation}\label{1.4}
	z\geq 0,\quad\omega=Mz+q\geq 0,\quad z^T\omega=0,
\end{equation}
where $M\in \mathbb{R}^{n\times n}$ and $q\in \mathbb{R}^{n}$ are given. From \cite{AVE4,AVE5,AVE6}, the LCP($q$, $M$) \eqref{1.4} can be formulated as the following GAVE:
\begin{equation}
	(M+I)x-(M-I)|x|=q,
\end{equation}
with
\begin{equation*}
	x=\frac{1}{2}((M-I)z+q).
\end{equation*}
Besides the well-known LCPs, other frequent optimization problems can  also be transformed into the GAVEs \eqref{1.1}, like linear programming and convex quadratic programming \cite{AVE1,AVE2}.

In the past two decades, the GAVE and AVE have attracted increasingly interests of various researchers due to its simple and special structure. In the aspect of theoretical analysis, Mangasarian \cite{AVE5,AVE2} proved that the AVE is NP-hard and gave some existence results of solutions for GAVE and AVE. Subsequently, some scholars \cite{AVE9,unique-Wu-1,unique-Wu-2,unique-Wu-3,unique-Wu-4} further improved the solvability and unique solution theories of GAVE and AVE.  In the aspect of numerical algorithms, iterative methods are the most used method. The early iterative methods include the finite succession of linear programs \cite{AVE13}, the optimization method \cite{AVE17}, the sign accord method \cite{AVE15}, the nonlinear HSS-like method \cite{HSS-like,PHSS-like} and the SOR-like method \cite{SOR-like-1,SOR-like-2}. In order to apply the iteration methods of nonlinear systems to the GAVE, Mangasarian originally introduced the subgradient for the nonlinear term $|x|$ to overcome the non-differentiability, and presented the generalized Newton (GN)  iterative method \cite{AVE19}. Thereafter, some scholars established some more efficient methods based on the GN method, for example, the GN method with stable quadratic convergence \cite{AVE24}, the generalized Traub method \cite{AVE25}, the modified GN method \cite{AVE26} and the relaxed GN method \cite{relaxed-GN}. However, those methods require expensive computing costs in actual computations because the coefficient matrix of each iteration is changing. Then Rohn et al. proposed the following more practical Picard iteration method \cite{AVE11,AVE16} to solve GAVE \eqref{1.1}:
\begin{equation}\label{1.5}
	Ax^{(k+1)}=B|x^{(k)}|+b,\quad k=0,1,2,\ldots.
	%x^{(k+1)}=A^{-1}(B|x^{(k)}|+b),\quad k=0,1,2,\ldots.
\end{equation}

Recently, for the nondifferentiable system of nonlinear equations in Banach space, Han \cite{AVE28} established a modified Newton-type (MN) iteration method through the separation of differential and non-differential parts. Lately, inspired by \cite{AVE28}, Wang et al. \cite{WangCao2019} presented the MN iteration method for the GAVE \eqref{1.1} by choosing a special differential part. The Picard iteration \eqref{1.5} is a special case of the MN iteration. In this paper, applying the same idea of \cite{AVE28} and utilizing the shift splitting \cite{SS} of coefficient matrix $A$, we propose a shift splitting modified Newton-type (SSMN) iteration method for the GAVE \eqref{1.1}. Compared with the MN method, the iteration scheme of our method is more balanced. Numerical experiments show the efficiency of the SSMN method.

The organization of the remaining parts of the current work is illustrated in the following. In Section 2, we present a brief introduction of method in \cite{AVE28,WangCao2019} and establish the  SSMN method for solving the GAVE \eqref{1.1}. In Section 3, some convergence conditions for the SSMN iteration method are investigated in detail. We give two numerical examples in Section 4 to verify the effectiveness of the SSMN method. Finally, the conclusions are given in Section 5.

\section{A shift splitting Modified Newton-Type (SSMN) Iteration Method}
Firstly, we consider the more general nonlinear system
\begin{equation}\label{2.1}
	F(x)=0, \quad\text{with}\quad F(x)=H(x)+G(x),
\end{equation}
where $H(x)$ and $G(x)$ are differentiable and Lipschitz continuous functions, respectively. Han \cite{AVE28} established the following modified Newton-type (MN) iteration method for finding the solution of the equation \eqref{2.1}:
\begin{equation}
	H'(x^{(k)})x^{(k+1)}=H'(x^{(k)})x^{(k)}-(H(x^{(k)})+G(x^{(k)})),\quad k=0,1,2,\ldots,
\end{equation}
where the Jacobian matrix $H'(x^{(k)})$ is nonsingular.

Then for the equivalent nonlinear system \eqref{1.3} of the GAVE \eqref{1.1}, different iteration methods can be obtained by choosing appropriate splitting of $F(x)$. By using the following regularized splitting for the matrix $A$,
\begin{equation*}
	A=(\Omega+A)-\Omega,
\end{equation*}
where $\Omega\in \mathbb{R}^{n\times n}$ indicates a positive semi-definite (PSD) matrix,
Wang et al. introduced the following way to choose $H(x)$ and $G(x)$:
\begin{equation*}
	H(x)=(\Omega+A)x \quad\text{and}\quad G(x)=-\Omega x-B|x|-b,
\end{equation*}
and proposed the MN iterative scheme to solve the GAVE \eqref{1.1}:
\begin{equation}\label{2.2}
	(\Omega+A)x^{(k+1)}=\Omega x^{(k)}+B|x^{(k)}|+b,\quad k=0,1,2,\ldots.
\end{equation}

By comparing the coefficient matrix at each iteration step, we can find that the MN iteration has a better-conditioned coefficient matrix than the Picard iteration \eqref{1.5} considering the PSD matrix $\Omega$. In particular, the MN iterative method \eqref{2.2} is reduced to the Picard iterative method \eqref{1.5} when $\Omega=\textbf{0}$, with $\textbf{0}$ standing for the zero matrix.

In this paper, in order to improve the computational efficiency, we apply another splitting for the matrix $A$
\begin{equation*}
	A=\frac{1}{2}(\Omega+A)-\frac{1}{2}(\Omega-A),
\end{equation*}
which can be regarded as a generalized shift-splitting \cite{SS}. Then we can obtain another way to choose $H(x)$ and $G(x)$:
\begin{equation*}
	H(x)=\frac{1}{2}(\Omega+A)x \quad\text{and}\quad G(x)=-\frac{1}{2}(\Omega-A) x-B|x|-b,
\end{equation*}
and establish a shift splitting modified Newton-type (SSMN) iterative scheme to solve the GAVE \eqref{1.1}:
\begin{equation}\label{2.3}
	(\Omega+A)x^{(k+1)}=(\Omega-A)x^{(k)}+2(B|x^{(k)}|+b),\quad k=0,1,2,\ldots.
\end{equation}

In comparison with the MN iteration, the coefficient matrix is unchanged, so the SSMN iteration can preserve the advantages of the MN iteration.  Moreover, from \cite{SS} we know that, the shift-splitting has a better convergence behavior than regularized splitting because the former is more balanced.

\section{Convergence Analysis}

In the current section, some sufficient conditions are presented to ensure the convergence of the SSMN method to solve the GAVE \eqref{1.1}. With regard to the AVE \eqref{1.2}, the convergence conditions of the SSMN method can be immediately attained by taking $B=I$ from the following theorems.

We first give two convergence conditions in the general case, then some special convergence conditions are obtained when $A$ is a symmetric positive definite (SPD) matrix or an $H_{+}$-matrix.

\subsection{The General Case}
In the current subsection, the following theorems are presented to verify the convergence of the SSMN iteration approach when the matrices $\Omega+A$ and $A$ are nonsingular, respectively.

\begin{theorem}\label{thm1}
	Suppose that $A,B\in \mathbb{R}^{n\times n}$, and $\Omega \in \mathbb{R}^{n\times n}$ is PSD such that  $\Omega+A$ is nonsingular. If
	\begin{equation}\label{3.1}
		||(\Omega+A)^{-1}||_2<\frac{1}{||\Omega-A||_2+2||B||_2},
	\end{equation}
	then the iteration sequence $\{x^{(k)}\}_{k=0}^{+\infty}$ generated by the SSMN iteration method converges to the solution $x^{*}$ of the GAVE \eqref{1.1} for any initial vector.
\end{theorem}
\begin{proof}
	Since $x^{*}$ is the solution of the GAVE \eqref{1.1}, it holds that
	\begin{equation}\label{3.2}
		(\Omega+A)x^{*}=(\Omega-A)x^{*}+2(B|x^{*}|+b).
	\end{equation}
	Subtracting \eqref{3.2} from \eqref{2.3}, we obtain
	\begin{equation}
		(\Omega+A)(x^{(k+1)}-x^{*})=(\Omega-A)(x^{(k)}-x^{*})+2B(|x^{(k)}|-|x^{*}|).
	\end{equation}
	According to the nonsingularity of matrix $A+\Omega$, we have
	\begin{equation}\label{3.3}
		x^{(k+1)}-x^{*}=(\Omega+A)^{-1}\big((\Omega-A)(x^{(k)}-x^{*})+2B(|x^{(k)}|-|x^{*}|)\big).
	\end{equation}
	Taking the 2-norm on both sides of \eqref{3.3}, we have
	\begin{equation}
		||x^{(k+1)}-x^{*}||_2\leq||(\Omega+A)^{-1}||_2(||\Omega-A||_2+2||B||_2)||x^{(k)}-x^{*}||_2.
	\end{equation}
	Then from condition \eqref{3.1} we draw the conclusion.
\end{proof}

\begin{theorem}\label{thm2}
	Suppose that $A\in \mathbb{R}^{n\times n}$ is nonsingular, $B\in \mathbb{R}^{n\times n}$, and $\Omega \in \mathbb{R}^{n\times n}$ is PSD such that $A+\Omega$ is nonsingular. If
	\begin{equation}\label{3.4}
		||A^{-1}||_2<\frac{1}{||\Omega-A||_2+||\Omega||_2+2||B||_2},
	\end{equation}
	then the iteration sequence $\{x^{(k)}\}_{k=0}^{+\infty}$ generated by the SSMN iteration method converges to the solution $x^{*}$ of the GAVE \eqref{1.1} for any initial vector.
\end{theorem}
\begin{proof}
	Based on  the Banach perturbation Lemma \cite{Golub1996}, the following estimation can be attained under the condition \eqref{3.4}
	\begin{equation*}
		\begin{split}
			||(\Omega+A)^{-1}||_2
			\displaystyle&\leq\frac{||A^{-1}||_2}{1-||A^{-1}||_2\cdot||\Omega||_2}\\
			\displaystyle&<\frac{\frac{1}{||\Omega-A||_2+||\Omega||_2+2||B||_2}}{1-\frac{||\Omega||_2}{||\Omega-A||_2+||\Omega||_2+2||B||_2}}\\
			\displaystyle&=\frac{1}{||\Omega-A||_2+2||B||_2}.
		\end{split}
	\end{equation*}
	Then from Theorem \ref{thm1}, the conclusion can be obtained.\\
\end{proof}

%\begin{corollary}
%	Suppose that $A\in \mathbb{R}^{n\times n}$, and $\Omega \in \mathbb{R}^{n\times n}$ is PSD such that $A+\Omega$ is nonsingular. If
%	\begin{equation*}
%		||(\Omega+A)^{-1}||_2<\frac{1}{||\Omega-A||_2+2},
%	\end{equation*}
%	or if $A$ is nonsingular and
%	\begin{equation*}
%		||A^{-1}||_2<\frac{1}{||\Omega-A||_2+||\Omega||_2+2},
%	\end{equation*}
%	then the iteration sequence $\{x^{(k)}\}_{k=0}^{+\infty}$ generated by the SSMN iteration method converges to the solution $x^{*}$ of the AVE \eqref{1.2} for any initial vector.
%\end{corollary}

\subsection{The SPD Matrix Case}
In the current subsection, the convergence conditions of the SSMN iteration method \eqref{2.3} are obtained when $A$  and $\Omega=\omega I$ are SPD and positive scalar matrices, respectively.

\begin{theorem}
	Suppose that  $A\in \mathbb{R}^{n\times n}$ is a SPD matrix, $\Omega=\omega I\in \mathbb{R}^{n\times n}$ with $\omega>0$.
	Let the minimum and maximum eigenvalues of matrix A are indicated by $\mu_\text{min}$ and $\mu_\text{max}$, respectively. Define $\tau=||B||_2$.
	Then the iteration sequence $\{x^{(k)}\}_{k=0}^{+\infty}$ generated by the SSMN iteration method converges to the solution $x^{*}$ of the GAVE \eqref{1.1} for any initial vector, provided that the following conditions hold
	\begin{equation}\label{3.6}
		\tau<\mu_\text{min}\quad \text{and} \quad \omega>\frac{\mu_\text{max}-\mu_\text{min}}{2}+\tau.
	\end{equation}
	
\end{theorem}
\begin{proof}
	As $\Omega=\omega I$, we have
	\begin{equation*}
		\begin{split}
			||\Omega-A||_2
			\displaystyle&=\max\limits_{\mu\in \text{sp}(A)}|\omega-\mu|\\
			\displaystyle&=\max\{|\omega-\mu_\text{max}|,|\omega-\mu_\text{min}|\}\\
			\displaystyle&=\left\{
			\begin{array}{ll}
				\mu_\text{max}-\omega, \quad \text{for}\quad  \omega\leq\frac{\mu_\text{max}+\mu_\text{min}}{2}\\
				\omega-\mu_\text{min}, \quad \text{for}\quad  \omega\geq\frac{\mu_\text{max}+\mu_\text{min}}{2}
			\end{array}
			\right.
		\end{split}
	\end{equation*}
	Hence,
	\begin{equation*}
		\begin{split}
			||(A+\Omega)^{-1}||_2(||\Omega-A||_2+2||B||_2)
			\displaystyle&=\left\{
			\begin{array}{ll}
				\frac{\mu_\text{max}-\omega+2\tau}{\omega+\mu_\text{min}}, \quad \text{for}\quad  \omega\leq\frac{\mu_\text{max}+\mu_\text{min}}{2}\\
				\frac {\omega-\mu_\text{min}+2\tau}{\omega+\mu_\text{min}}, \quad \text{for}\quad  \omega\geq\frac{\mu_\text{max}+\mu_\text{min}}{2}
			\end{array}
			\right.
		\end{split}
	\end{equation*}
	Through explicit solving of the following inequalities
	\begin{equation*}
		\frac{\mu_\text{max}-\omega+2\tau}{\omega+\mu_\text{min}}<1
	\end{equation*}
	when $\omega\leq\frac{\mu_\text{max}+\mu_\text{min}}{2}$ and
	\begin{equation*}
		\frac{\omega-\mu_\text{min}+2\tau}{\omega+\mu_\text{min}}<1
	\end{equation*}
	when $\omega\geq\frac{\mu_\text{max}+\mu_\text{min}}{2}$, since the upper bound should be larger than the lower bound, the convergence conditions \eqref{3.6} can be derived for the iteration parameter $\omega$.
\end{proof}

%Particularly, when $B = I $, we can obtain the next corollary.
%\begin{corollary}
%	Suppose that $A\in \mathbb{R}^{n\times n}$  is a SPD matrix, $\Omega=\omega I\in \mathbb{R}^{n\times n}$ with $\omega>0$. Let the minimum and maximum eigenvalues of matrix A are indicated by $\mu_\text{min}$ and $\mu_\text{max}$, respectively.
%	Then the iteration sequence $\{x^{(k)}\}_{k=0}^{+\infty}$ generated by the SSMN iteration method converges to the solution $x^{*}$ of the AVE \eqref{1.2} for any initial vector, provided that the following conditions hold
%	\begin{equation}
%		1<\mu_\text{min}\quad \text{and} \quad \omega>\frac{\mu_\text{max}-\mu_\text{min}}{2}+1.
%	\end{equation}
%\end{corollary}

\subsection{The $H_{+}$-matrix Case}
Firstly, some required notations and definitions are presented; see \cite{LCP,AVE6,WangCao2019}. Let $V=(v_{ij})\in \mathbb{R}^{n\times n}$. We use $|V|=(|v_{ij}|)\in \mathbb{R}^{n\times n}$ to represent the absolute value of the matrix $V$. The matrix $V$ is called a $Z$-matrix if each of its off-diagonal entries is non-positive. The nonsingular $Z$-matrix $V$  is called an $M$-matrix if $V^{-1}$ is a nonnegative matrix. The matrix $V$ is called an $H$-matrix when its corresponding comparison matrix $\langle V \rangle=(\langle v \rangle _{ij})\in \mathbb{R}^{n\times n}$ is an $M$-matrix, where
\begin{equation*}
	\langle v_{ij}\rangle=
	\left\{
	\begin{aligned}
		|v_{ij}|,  && i=j,\\
		-|v_{ij}|, && i\neq j,
	\end{aligned}
	\qquad i,j=1,2,\cdots,n.
	\right.
\end{equation*}
In particular, an $H$-matrix is called an $H_{+}$-matrix if each of its diagonal entries is positive.

The following theorem gives the condition for the convergence of the SSMN iteration method \eqref{2.3} when $A$ and $\Omega$ are an $H_{+}$-matrix and a positive scalar matrix, respectively.

\begin{theorem}
	Suppose that $A\in \mathbb{R}^{n\times n}$ is an $H_{+}$-matrix, $B\in \mathbb{R}^{n\times n}$, and $\Omega \in \mathbb{R}^{n\times n}$ is a positive scalar matrix. When
	\begin{equation}\label{3.5}
		||(\Omega+\langle A\rangle)^{-1}||_2<\frac{1}{||{\Omega}+|A|+2|B|~||_2},
	\end{equation}
	then the iteration sequence $\{x^{(k)}\}_{k=0}^{+\infty}$ generated by the SSMN iteration method converges to the solution $x^{*}$ of the GAVE \eqref{1.1} for any initial vector.
\end{theorem}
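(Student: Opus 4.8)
The plan is to reuse the exact error recursion \eqref{3.3} from the proof of Theorem \ref{thm1}, but to replace the direct $2$-norm estimate with a componentwise absolute-value estimate so that the $H_{+}$-structure can be brought to bear. As preliminary bookkeeping I would first confirm that the iteration is well defined: since $A$ is an $H_{+}$-matrix and $\Omega=\omega I$ with $\omega>0$, adding the positive scalar $\omega$ to each (already positive) diagonal entry of $A$ leaves the off-diagonal sign pattern untouched, so $A+\Omega$ is again an $H_{+}$-matrix and in particular invertible. The same observation yields the identity $\langle A+\Omega\rangle=\langle A\rangle+\Omega$, which serves as the bridge between the coefficient matrix and the quantity appearing in \eqref{3.5}.

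The core step is to dominate the error-propagation matrix componentwise. I would invoke the standard $H$-matrix inequality $|(A+\Omega)^{-1}|\le\langle A+\Omega\rangle^{-1}=(\langle A\rangle+\Omega)^{-1}$ (see \cite{BermanPlemmons1979}), where the inverse on the right is nonnegative because $\langle A\rangle+\Omega$ is an $M$-matrix. Starting from \eqref{3.3}, taking componentwise absolute values and using $|MN|\le|M|\,|N|$ together with the elementary bound $\big||x^{(k)}|-|x^{*}|\big|\le|x^{(k)}-x^{*}|$ gives
$$|x^{(k+1)}-x^{*}|\le(\langle A\rangle+\Omega)^{-1}\big(|\Omega-A|+2|B|\big)\,|x^{(k)}-x^{*}|.$$
Because $\Omega$ is diagonal with positive entries, the triangle inequality yields $|\Omega-A|\le\Omega+|A|$ componentwise; since $(\langle A\rangle+\Omega)^{-1}\ge0$, the product inequality is preserved, and the nonnegative propagation matrix is dominated by $M:=(\langle A\rangle+\Omega)^{-1}(\Omega+|A|+2|B|)$, so that $|x^{(k+1)}-x^{*}|\le M\,|x^{(k)}-x^{*}|$.

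The final step converts this componentwise contraction into a norm contraction. Using $\|v\|_2=\big\||v|\big\|_2$ for every vector, together with the monotonicity of the $2$-norm on nonnegative vectors, the bound $|x^{(k+1)}-x^{*}|\le M\,|x^{(k)}-x^{*}|$ implies $\|x^{(k+1)}-x^{*}\|_2\le\|M\|_2\,\|x^{(k)}-x^{*}\|_2$. Submultiplicativity then gives $\|M\|_2\le\|(\langle A\rangle+\Omega)^{-1}\|_2\,\big\|\Omega+|A|+2|B|\big\|_2$, which is strictly less than $1$ precisely by hypothesis \eqref{3.5}, establishing linear convergence.

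I expect the main obstacle to be the passage to componentwise estimates, specifically justifying the two inequalities $|(A+\Omega)^{-1}|\le(\langle A\rangle+\Omega)^{-1}$ and $|\Omega-A|\le\Omega+|A|$ and verifying that the nonnegativity of $(\langle A\rangle+\Omega)^{-1}$ lets one chain them without reversing any inequality. This is exactly where the $H_{+}$ hypothesis on $A$ and the positivity of $\Omega$ are indispensable; once the clean componentwise domination by $M$ is in place, the monotonicity-of-norm argument together with \eqref{3.5} finishes the proof routinely.
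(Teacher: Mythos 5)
Your proposal is correct and follows essentially the same route as the paper: both take componentwise absolute values of the error recursion \eqref{3.3}, invoke $|(A+\Omega)^{-1}|\le(\langle A\rangle+\Omega)^{-1}$ and $|\Omega-A|\le\Omega+|A|$ to dominate the error by the nonnegative matrix $(\langle A\rangle+\Omega)^{-1}(\Omega+|A|+2|B|)$, and then use submultiplicativity of the $2$-norm with hypothesis \eqref{3.5}. The only cosmetic difference is that the paper passes through the spectral radius bound $\rho(Z)\le\|Z\|_2<1$, whereas you convert the componentwise contraction directly into a $2$-norm contraction via monotonicity; both are valid.
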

\begin{proof}
	By applying the absolute values on both sides of \eqref{3.3}, we have
	\begin{equation*}
		\begin{split}
			|x^{(k+1)}-x^{*}|
			\displaystyle&\leq|(\Omega+A)^{-1}|(|\Omega-A||x^{(k)}-x^{*}|+2|B||x^{(k)}-x^{*}|)\\
			\displaystyle&\leq(\Omega+\langle A\rangle)^{-1}(\Omega+|A|+2|B|)|x^{(k)}-x^{*}|\\
			\displaystyle&:=Z|x^{(k)}-x^{*}|,
		\end{split}
	\end{equation*}
	where we use the following estimate from \cite{AVE30}
	\begin{equation}
		|(\Omega+A)^{-1}|\leq(\Omega+\langle A\rangle)^{-1}.
	\end{equation}
	
	The iteration sequence $\{x^{(k)}\}_{k=0}^{+\infty}$ can converge to the solution $x^{*}$ when $\rho(Z)<1$, with $\rho(Z)$ denoting the spectral radius of the matrix $Z$.
	By the condition \eqref{3.5}, we have
	\begin{equation*}
		\begin{split}
			\rho(Z)
			\displaystyle&\leq||(\Omega+\langle A\rangle)^{-1}(\Omega+|A|+2|B|)||_2\\
			\displaystyle&\leq||(\Omega+\langle A\rangle)^{-1}||_2\cdot||\Omega+|A|+2|B|\,||_2\\
			\displaystyle&<1,
		\end{split}
	\end{equation*}
	then the proof is completed.
\end{proof}

%In particular, if we take $B=I$, then the convergence condition for solving the AVE \eqref{1.2} can be given immediately.
%
%\begin{corollary}
%	Suppose that  $A\in \mathbb{R}^{n\times n}$ is an $H_{+}$-matrix and $\Omega \in \mathbb{R}^{n\times n}$ is a positive scalar matrix. When
%	\begin{equation}
%		||(\Omega+\langle A\rangle)^{-1}||_2<\frac{1}{||\Omega+|A|+2I||_2},
%	\end{equation}
%	then the iteration sequence $\{x^{(k)}\}_{k=0}^{+\infty}$ generated by the SSMN iteration method converges to the solution $x^{*}$ of the AVE \eqref{1.2} for any initial vector.
%\end{corollary}

\section{Numerical Results}

This section presents two numcerial examples to compare the numerical performance of the SSMN and MN iteration methods in terms of the number of iteration steps (denoted by ``IT"), the elapsed CPU time in second (denoted by ``CPU") and the residual (represented by ``RES") which is described by
\begin{equation*}
	\mathrm {RES}(x^{(k)}):=\frac{||Ax^{(k)}-B|x^{(k)}|-b||_2}{||b||_2},
\end{equation*}
where $x^{(k)}$ indicates the $k$th approximate solution to the GAVE \eqref{1.1}. In \cite{WangCao2019}, numerical examples have shown that the MN method outperforms the generalized Newton iteration method \cite{AVE19}, the modified generalized Newton (MGN) iteration method \cite{AVE26} and the Picard iteration method \cite{AVE11}. So in this paper, we just compare our methods with the MN method. The numerical experiments are performed in Matlab on an Intel(R) Core(TM) i5-6300U processor (2.40GHz, 8GB RAM).

In our implementations,  the initial guess is the zero vector. All iterations are terminated once $x^{(k)}$ satisfies $\mathrm {RES}(x^{(k)})\leq 10^{-7}$ or the number of iterations exceeds from a predefined iteration number $k_\text{max}=5000$.

For convenience, we take $\Omega=\omega I$ in both MN and SSMN methods. Here, the optimal experimental value $\omega_\mathrm{exp}$ is utilized as the iteration parameter $\omega$ in MN and SSMN iteration methods to minimize the number of iteration steps.
In addition, the Cholesky and LU factorizations are utilized to solve all the subsystems when $\omega I+A$ are SPD and nonsymmetric, respectively.

\paragraph{Example 1} (See \cite{AVE6})
Consider the LCP($q$, $M$), where
$M\in \mathbb{R}^{n\times n}$ is defined as $M=\widehat{M}+\mu I\in \mathbb{R}^{n \times n}$, and $q\in \mathbb{R}^{n}$ is described by $q=-Mz^{*}\in \mathbb{R}^{n}$, where
\begin{equation*}
	\widehat{M}=\mathrm {Tridiag}(-I,S,-I)=\begin{bmatrix}
		S&-I&0&\cdots&0&0\\
		-I&S&-I&\cdots&0&0\\
		0&-I&S&\cdots&0&0\\
		\vdots&\vdots& &\ddots&\vdots&\vdots\\
		0&0&\cdots&\cdots&S&-I\\
		0&0&\cdots&\cdots&-I&S
	\end{bmatrix}\in \mathbb{R}^{n\times n}
\end{equation*}
is a block-tridiagonal matrix,
\begin{equation*}
	S=\mathrm {tridiag}(-1,4,-1)=\begin{bmatrix}
		4&-1&0&\cdots&0&0\\
		-1&4&-1&\cdots&0&0\\
		0&-1&4&\cdots&0&0\\
		\vdots&\vdots& &\ddots&\vdots&\vdots\\
		0&0&\cdots&\cdots&4&-1\\
		0&0&\cdots&\cdots&-1&4
	\end{bmatrix}\in \mathbb{R}^{m\times m}
\end{equation*}
is a tridiagonal matrix, $n=m^{2}$, and $z^{*}=(1,2,1,2,\ldots,1,2,\ldots)^{T}\in \mathbb{R}^{n}$ indicates the unique solution of the LCP($q$, $M$).

The optimal experimental parameters, the iteration steps, the CPU times, and the residuals derived by MN and SSMN iteration methods with $\mu=-4$, $\mu=-1$ and $\mu=4$ for various problem sizes $n$ are compared in Tables \ref{table-1}-\ref{table-3}, respectively.

\begin{table}
	\caption{Numerical results for Example 1 with $\mu=-4$}\label{table-1}
	\centering
	\begin{tabular}{llllllll}
		\hline
		\toprule
		%\midrule
		&Method &$n$& $100^2$  & $200^2$  & $300^2$   & $400^2$& $500^2$ \\
		\hline
		&MN  &$\omega_\mathrm{exp}$   &4.3    &4.4    &4.3     &4.4&4.4 \\
        &&IT   &43    &42    &41      &41  &40\\
		&&CPU  &0.2629  &1.6487 &5.2247 & 11.6777&21.6347\\
        &&RES  & 8.29e-08  &8.75e-08  &8.06e-08 & 8.00e-08&9.24e-08\\
		&SSMN  &$\omega_\mathrm{exp}$   &6.1    &6.0    &6.1      &5.9&6.0 \\
        &&IT   &24    &23      &22   &22 &22\\
		&&CPU  &0.1372  &1.0304  &2.9610 &7.1242&14.7887\\
        &&RES  &6.25e-08  &6.24e-08  &9.38e-08 &6.26e-08& 6.39e-08\\		
		\bottomrule
		\hline
	\end{tabular}
\end{table}

\begin{table}
	\caption{Numerical results for Example 1 with $\mu=-1$}\label{table-2}
	\centering
	\begin{tabular}{llllllll}
		\hline
		\toprule
		%\midrule
		&Method &$n$& $100^2$  & $200^2$  & $300^2$   & $400^2$& $500^2$ \\
		\hline
		&MN  &$\omega_\mathrm{exp}$   &1.9   &1.9   &1.9      &1.9 &1.9\\
        &&IT   &41    &41    &40     &40 &40\\
		&&CPU  &0.2139  &1.6057  &4.8538 &11.4176&21.6197\\
        &&RES  & 6.99e-08  &5.39e-08  &8.84e-08 & 8.01e-08 &7.45e-08\\
		&SSMN  &$\omega_\mathrm{exp}$   &5.9  &5.9    &5.6      &5.8&5.5\\
        &&IT   &31    &31      &30   &31 &29\\
		&&CPU  &0.1757  &1.4100  &4.0520 &11.6920 &23.1958\\
        &&RES  &6.84e-08  &7.21e-08  &7.07e-08 &5.99e-08 &9.90e-08\\			
		\bottomrule
		\hline
	\end{tabular}
\end{table}

\begin{table}
	\caption{Numerical results for Example 1 with $\mu=4$}\label{table-3}
	\centering
	\begin{tabular}{llllllll}
		\hline
		\toprule
		%\midrule
		&Method &$n$& $100^2$  & $200^2$  & $300^2$   & $400^2$& $500^2$ \\
		\hline
		&MN  &$\omega_\mathrm{exp}$   &4.9    &5.3    &5.7      &5.5&5.0 \\
	&&IT   &11   &11   &12      &12 &11\\
	&&CPU  &0.0568 &0.4315 &1.6151 & 4.7597&8.3397\\
	&&RES  & 5.77e-08  &5.69e-08  &5.41e-08 & 2.71e-08&2.60e-08\\
	&SSMN  &$\omega_\mathrm{exp}$   &19.0    &18.9    &19.3      &19.3&19.1 \\
	&&IT   &16   &16     &15   &15 &15\\
	&&CPU  &0.0907&0.7240  &2.6777  &6.1575&11.3842\\
	&&RES  &7.09e-08  &5.98e-08 &9.81e-08 &9.47e-08& 8.62e-08\\			
		\bottomrule
		\hline
	\end{tabular}
\end{table}

According to the results of Tables \ref{table-1}-\ref{table-3}, both the MN and SSMN iteration methods can converge to the solution of the LCP($q$, $M$) in all the above cases. Notably, if $\mu=-4$, which means that the matrices $A$ and $M$ are symmetric indefinite, and $\mu=-1$, which means that matrix $A$ is SPD and matrix $M$ is symmetric indefinite, SSMN method provides superior results compared with the MN method in terms of both the IT and CPU times. The superiorities of SSMN method disappear for $\mu=4 $, which means that both the matrices $A$ and $M$ are SPD.

\paragraph{Example 2} (See \cite{AVE6})
Consider the LCP($q$, $M$), where
$M\in \mathbb{R}^{n\times n}$ is defined as $M=\widehat{M}+\mu I\in \mathbb{R}^{n \times n}$, and $q\in \mathbb{R}^{n}$ is described by $q=-Mz^{*}\in \mathbb{R}^{n}$, where
\begin{equation*}
	\widehat{M}=\mathrm {Tridiag}(-1.5I,S,-0.5I)=\begin{bmatrix}
		S&-0.5I&0&\cdots&0&0\\
		-1.5I&S&-0.5I&\cdots&0&0\\
		0&-1.5I&S&\cdots&0&0\\
		\vdots&\vdots& &\ddots&\vdots&\vdots\\
		0&0&\cdots&\cdots&S&-0.5I\\
		0&0&\cdots&\cdots&-1.5I&S
	\end{bmatrix}\in \mathbb{R}^{n\times n}
\end{equation*}
is a block-tridiagonal matrix,
\begin{equation*}
	S=\mathrm {tridiag}(-1.5,4,-0.5)=\begin{bmatrix}
		4&-0.5&0&\cdots&0&0\\
		-1.5&4&-0.5&\cdots&0&0\\
		0&-1.5&4&\cdots&0&0\\
		\vdots&\vdots& &\ddots&\vdots&\vdots\\
		0&0&\cdots&\cdots&4&-0.5\\
		0&0&\cdots&\cdots&-1.5&4
	\end{bmatrix}\in \mathbb{R}^{m\times m}
\end{equation*}
is a tridiagonal matrix, $n=m^{2}$, and $z^{*}=(1,2,1,2,\ldots,1,2,\ldots)^{T}\in \mathbb{R}^{n}$ indicates the unique solution of the LCP($q$, $M$).

In Tables \ref{table-4}-\ref{table-6}, we list the optimal experimental parameters, the iteration steps, the CPU times, and the residuals for the MN and SSMN methods for different problem sizes $n$ with $\mu=-4$, $\mu=-2$ and $\mu=4$, respectively.

\begin{table}
	\caption{Numerical results for Example 2 with $\mu=-4$}\label{table-4}
	\centering
	\begin{tabular}{llllllll}
		\hline
		\toprule
		%\midrule
		&Method &$n$& $100^2$  & $200^2$  & $300^2$   & $400^2$& $500^2$ \\
		\hline
		&MN  &$\omega_\mathrm{exp}$   &4.3    &4.3   &4.4      &4.3&4.4 \\
		&&IT   &45   &43    &43      &42 &42\\
		&&CPU  &0.2329  &1.7009 &5.2207 & 12.6712&26.8596\\
		&&RES  & 8.00e-08  &9.40e-08  &8.79e-08 & 8.57e-08&8.74e-08\\
		&SSMN  &$\omega_\mathrm{exp}$   &5.8   &6.1    &5.9      &6.0&6.1\\
		&&IT   &24    &24      &23   &23 &23\\
		&&CPU  &0.1365  &1.0846  &3.1040 &8.1952&15.5925\\
		&&RES  &6.54e-08  &6.97e-08  &7.02e-08 &6.92e-08& 7.03e-08\\	
		\bottomrule
		\hline
	\end{tabular}
\end{table}

\begin{table}
	\caption{Numerical results for Example 2 with $\mu=-2$}\label{table-5}
	\centering
	\begin{tabular}{llllllll}
		\hline
		\toprule
		%\midrule
        &Method &$n$& $100^2$  & $200^2$  & $300^2$   & $400^2$& $500^2$ \\
		\hline
		&MN  &$\omega_\mathrm{exp}$   &2.4  &2.3   &2.3      &2.3  &2.4\\
        &&IT   &48   &46  &45 &45&45\\
		&&CPU  &0.2460  &1.8003 &5.4037 &13.4490 &28.5645\\
        &&RES  & 8.94e-08  &8.96e-08  &9.42e-08 & 8.16e-08 &8.48e-08\\
		&SSMN  &$\omega_\mathrm{exp}$   &4.0   &3.9  &4.0      &4.1&3.8\\
        &&IT   &26  &25      &25  &25 &24\\
		&&CPU  &0.1468  &1.1207  &3.3344 &8.9065 &16.4175\\
        &&RES  &7.21e-08  &7.06e-08  &6.69e-08 &6.68e-08 &6.24e-08\\			
		\bottomrule
		\hline
	\end{tabular}
\end{table}

\begin{table}
	\caption{Numerical results for Example 2 with $\mu=4$}\label{table-6}
	\centering
	\begin{tabular}{llllllll}
		\hline
		\toprule
		%\midrule
		&Method &$n$& $100^2$  & $200^2$  & $300^2$   & $400^2$& $500^2$ \\
		\hline
		&MN  &$\omega_\mathrm{exp}$   &5.4&5.1&5.3&5.4&4.8 \\
		&&IT   &11    &11   &11      &11 &11\\
		&&CPU  &0.0562  &0.4277 &1.6433 & 4.3514&7.9189\\
		&&RES  & 7.83e-08  &4.38e-08  &5.82e-08 & 8.12e-08&5.00e-08\\
		&SSMN  &$\omega_\mathrm{exp}$   &19.4    &20.4    &19.3    &20.1&19.4\\
		&&IT   &16  &16      &16  &16 &16\\
		&&CPU  &0.0905 &0.7723 &2.9442  &6.8240&11.4411\\
		&&RES  &7.61e-08 &9.89e-08  &5.59e-08 &7.34e-08& 4.54e-08\\		
		\bottomrule
		\hline
	\end{tabular}
\end{table}

The results in Tables \ref{table-4}-\ref{table-6} demonstrate the convergence of the two methods. Consistent with the results in Example 1, if $\mu=-4,-2$, which means that both the matrices $A$ and $M$ are unsymmetric indefinite, the SSMN method uses smaller iteration numbers and less CPU times than the MN method, if $\mu=4$ (both the matrices A and M are unsymmetric positive definite), the MN method outperforms the SSMN method.

Therefore, the SSMN iteration method is a powerful and efficient iterative approach to solve the LCP($q$, $M$), especially when at least one of $A$ and $M$ is indefinite.

\section{Conclusion}

In this paper, we have established a new iteration method, the SSMN method, for solving large sparse GAVEs. The SSMN method is based on the shift splitting of the coefficient matrix of the linear part, in contrast to the regularized splitting employed by the MN method. In comparison with the regularized splitting, the shift splitting is more balanced, enabling the SSMN method performing well for general coefficient matrices. Moreover, we have described convergence conditions for general and special coefficient matrices. Under some sufficient conditions for the coefficient matrices, the SSMN iteration method generates a sequence that converges to the exact solution for any initial vector. Furthermore, our numerical experiments have shown that the SSMN and MN methods complement each other. The optimal use of these two methods depends on the definiteness of the coefficient matrix of the linear part. On the one hand, when the coefficient matrix is positive definite, the MN method has a better performance. On the other hand, when the coefficient matrix is indefinite, the SSMN method is much more efficient. Therefore, if the definiteness of the coefficient matrix of the linear part is known, one can use this information to choose the suitable method for an optimal performance. If the definiteness of the matrix is unknown, we suggest one to use the SSMN method, which performs well for general coefficient matrices.

\section*{Acknowledgements}
This work was supported by the Natural Science Foundation of Gansu
Province (No. 20JR5RA464) and the National Natural Science Foundation of
China (No. 11501272).

% BibTeX users please use one of
%\bibliographystyle{spbasic}      % basic style, author-year citations
%\bibliographystyle{spmpsci_unsrt}      % mathematics and physical sciences
%\bibliographystyle{spphys}       % APS-like style for physics
%\bibliographystyle{spmpsci}
%\bibliography{AVE}   % name your BibTeX data base

\end{document}